\def\ti{\tilde}
\theoremstyle{plain}
\newtheorem{thm}{Theorem}[section]
\newtheorem{lemma}[thm]{Lemma}
\newtheorem{prop}[thm]{Proposition}
\newtheorem{defn}[thm]{Definition}
\theoremstyle{definition}
\newtheorem{rem}[thm]{Remark}
\DeclareMathOperator{\Per}{Per}
\DeclareMathOperator{\Homeo}{Homeo}
\DeclareMathOperator{\Fix}{Fix}
\newcommand{\ha}{\hat \alpha}
\newcommand{\hb}{\hat \beta}
\newcommand{\A}{{\mathbb A}}
\newcommand{\F}{{\mathbb F}}
\newcommand{\cF}{{\mathcal F}}
\newcommand{\cR}{{\mathcal R}}
\newcommand{\R}{\mathbb R}
\newcommand{\Z}{\mathbb Z}
\author{John Franks}
\title{Rotation Numbers for $S^2$ diffeomorphisms}
\date{}
\begin{document}
 
\maketitle
\pagenumbering{arabic}

In these largely expository notes we describe the properties
of, and generalize, the function $\cR$ which assigns a number to a $4$-tuple
of distinct fixed points of an orientation
preserving homeomorphism or diffeomorphism of $S^2$. I am indebted to
Patrice LeCalvez for telling me many of the basic properties described
here.

\section{The function $\cR_f$ for fixed points.}

We let $f$ be an orientation preserving homeomorphism and 
denote by $X$ the set of fixed points of $f$. We will assume for
the moment that $X$ contains at least four distinct points.
Given four distinct points $x_1,x_2,x_3,x_4 \in \Fix(f)$ 
we consider the sets $A = \{x_1,x_2\}$ and $B = \{x_3,x_4\}.$
One can puncture at $A$ to obtain an annulus and then consider
a lift of $f$ to the universal cover of this annulus.  The difference
of the rotation numbers of the points of $B$ (with respect to this
lift) is an element of $\R$ which is independent of the 
choice of lift.  This gives a real valued function $\cR_f$ of the
four fixed points $\{x_1,x_2,x_3,x_4\}.$ This function has some
remarkable symmetries under permutations of its arguments which
form the content of this note.

We will be interested in the intersection of
oriented paths 
$\alpha: [0,1] \to S^2 \setminus B$
and $\beta: [0,1] \to S^2 \setminus A$ and 
with $\alpha$ running from  $x_1$ to $x_2,$ and 
$\beta$ running from $x_3$ to $x_4.$ These paths have an
algebraic intersection number determined by the orientation of
the paths and the orientation of $S^2$ which we now define.
The ends of $\beta,\ x_3$ and $x_4,$
are disjoint from $\alpha$ and  we can close $\beta$ to a 
loop by concatenating with a path from $\delta$ from $x_4$ to $x_3$ which lies
in the complement of the image of $\alpha.$ If $\delta * \beta$ is
loop obtained by concatenating $\delta$ and  $\beta$, then its
homology class $[\delta * \beta] \in H_1(S^2 \setminus A) \cong \Z$ 
is independent of the choice of $\delta.$  

\begin{defn}
We define the algebraic intersection number $\alpha \cdot \beta \in \Z$ by
\[
[\delta * \beta] =  (\alpha \cdot \beta) [u],
\]
where $u$ is an embedded closed loop in 
$S^2 \setminus A,$ positively oriented with respect to $x_1,$ 
and hence $[u]$ is a generator of $H_1(S^2 \setminus A)$.
\end{defn}

The number $\alpha \cdot \beta$ 
number depends only on the homology classes
$[\alpha] \in H_1(S^2 \setminus B, A)$ and
$[\beta] \in H_1(S^2 \setminus A, B).$ 
There is a standard skew-symmetric intersection pairing 
\[
i: H_1(S^2 \setminus B, A) \times H_1(S^2 \setminus A, B) 
\to H_0(S^2 \setminus A \cup B) \cong \Z.
\]
and the algebraic intersection number of $\alpha \cdot \beta$ is
equal to $i([\alpha], [\beta]).$  

There are three elementary facts we will have occasion to use
\begin{itemize}
\item The intersection is skew-symmetric: 
$\alpha \cdot \beta = - \beta \cdot \alpha.$
\item If $\hat \alpha(t) = \alpha(1-t)$, is the path with reverse
parametrization, then $\hat \alpha \cdot \beta = - \alpha \cdot \beta.$
\item If $g: S^2 \to S^2$ is an orientation preserving homeomorphism
then $(g \circ \alpha) \cdot (g \circ \beta) = \alpha \cdot \beta.$
\end{itemize}

\subsection{Characterization of $\cR_f.$}
In this section we give a definition of the function 
$\cR_f$ and provide several equivalent characterizations
(Propositions~(\ref{prop: cR T^n}), (\ref{prop: bhat-loop})
and (\ref{prop: gamma})). Throughout this section we will
assume that $f$ is an orientation preserving homeomorphism
and $X = \{x_1,x_2,x_3,x_4\}$ is a subset of $\Fix(f)$. 
The final result of this section shows that for a 
fixed $X$ the function $\cR_f$ is
a homomorphism from the group of orientation preserving
homeomorphisms which pointwise fix $X$ to $\Z$.

\begin{defn}\label{def: cR}
Let $f: S^2 \to S^2$ be an orientation preserving homeomorphism and
suppose $\{x_1,x_2,x_3,x_4\}$ is a subset of distinct points
of $\Fix(f).$  Let $A = \{x_1,x_2\}$ and $B = \{x_3,x_4\}$ and
let $\alpha$ be an oriented path in $S^2 \setminus B$
running from $x_1$ to $x_2$ and let $\beta$ 
be an oriented path in $S^2 \setminus A$ running from 
$x_3$ to $x_4.$ Then we define 
\[
\cR_{f}(x_1,x_2,x_3,x_4) = \alpha \cdot (f \circ \beta) - \alpha \cdot \beta.
\]
\end{defn}

The notation implies that 
$\cR_f(x_1,x_2,x_3,x_4)$ depends only on $x_1,x_2,x_3,x_4 \in \Fix(f)$
and not on the choice of $\alpha$ and $\beta$.  This is indeed true
and follows immediately from the following proposition which
gives an alternate description of $\cR_f.$ As in the definition
above we let $\{x_1,x_2,x_3,x_4\}$ be a set of distinct points in $\Fix(f)$
and define $A = \{x_1,x_2\}$ and $B = \{x_3,x_4\}.$

\begin{prop}\label{prop: cR T^n}
Let $V = S^2 \setminus A$ 
and let $\ti f : \ti V \to \ti V$ be the lift of $f$ to the 
universal covering space of $V$ which fixes $\ti x_3$, a lift
of $x_3$.  Let $T: \ti V \to \ti V$ be the generator of the 
group of covering translations corresponding to a loop in $V$
with intersection number $+1$ with some (and hence any) oriented path
from $x_1$ to $x_2$. If $\ti x_4 \in \ti V$ is a lift of $x_4,$ then
$\cR_f(x_1,x_2,x_3,x_4) = n \in \Z$ where $\ti f(\ti x_4) = T^n(\ti x_4).$
\end{prop}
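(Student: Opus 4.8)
The plan is to transport both terms in Definition~\ref{def: cR} into the universal cover $\ti V$, where the algebraic intersection number becomes a power of the covering translation $T$. Throughout, $\beta$ denotes a fixed oriented path in $V=S^2\setminus A$ from $x_3$ to $x_4$ as in Definition~\ref{def: cR}; note that $f\circ\beta$ is again such a path, since $f$ fixes $x_3$ and $x_4$. Fix the lifts $\ti x_3,\ti x_4$ of the statement and let $p:\ti V\to V$ be the covering projection. For any oriented path $\gamma$ in $V$ from $x_3$ to $x_4$, let $\ti\gamma$ be its lift with $\ti\gamma(0)=\ti x_3$; then $\ti\gamma(1)$ lies in the fiber $p^{-1}(x_4)$, which is a single $T$-orbit, so $\ti\gamma(1)=T^{N(\gamma)}(\ti x_4)$ for a unique $N(\gamma)\in\Z$. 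The proposition will follow from two assertions: that $N(\gamma)-N(\gamma')$ equals $\alpha\cdot\gamma-\alpha\cdot\gamma'$, and that $N(f\circ\beta)=N(\beta)+n$.

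First I would prove the difference formula
\[
N(\gamma)-N(\gamma') = \alpha\cdot\gamma-\alpha\cdot\gamma'.
\]
Form the loop $L=\hat{\gamma'}*\gamma$ based at $x_3$ (traverse $\gamma$, then the reverse $\hat{\gamma'}$ of $\gamma'$). Lifting $L$ from $\ti x_3$ and using that lifting is equivariant under the deck group, the lift of $\gamma$ ends at $T^{N(\gamma)}(\ti x_4)$ and the subsequent lift of $\hat{\gamma'}$ ends at $T^{N(\gamma)-N(\gamma')}(\ti x_3)$; hence the class of $L$ in $\pi_1(V)\cong\Z$, written in the generator that corresponds to $T$, equals $N(\gamma)-N(\gamma')$. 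On the homological side, since $L$ is already a loop the defining relation of the intersection number (with trivial connecting path) gives $[L]=(\alpha\cdot L)[u]$, while additivity under concatenation (from bilinearity of $i$) together with the skew-symmetry and reversal rules yields $\alpha\cdot L=\alpha\cdot\gamma-\alpha\cdot\gamma'$. The one point demanding care is the normalization of signs: the deck generator $T$ must be identified with the homology generator $[u]$, and indeed both are pinned down by $\alpha\cdot u=+1$ --- this is the defining property of $T$ in the statement, and it holds for $[u]$ by setting the loop equal to $u$ in the relation $[\delta*\beta]=(\alpha\cdot\beta)[u]$, which forces $\alpha\cdot u=1$. With $T\leftrightarrow[u]$, comparing the two computations of the $T$-power of $[L]$ gives the formula. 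Taking $\gamma=f\circ\beta$ and $\gamma'=\beta$ then yields $\cR_f(x_1,x_2,x_3,x_4)=N(f\circ\beta)-N(\beta)$.

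Next I would evaluate $N(f\circ\beta)$. Because $\ti f$ is the lift of $f$ fixing $\ti x_3$, the path $\ti f\circ\ti\beta$ projects to $f\circ\beta$ and begins at $\ti x_3$, so it is the lift of $f\circ\beta$ from $\ti x_3$. Since $f$ is orientation preserving and fixes both points of $A$, a small positively oriented loop about $x_1$ is carried to a loop still positively oriented about $x_1$, so $f$ induces the identity on $\pi_1(V)\cong\Z$ and therefore $\ti f$ commutes with $T$. Hence
\[
\ti f(\ti\beta(1))=\ti f\bigl(T^{N(\beta)}(\ti x_4)\bigr)=T^{N(\beta)}\ti f(\ti x_4)=T^{N(\beta)+n}(\ti x_4),
\]
using $\ti f(\ti x_4)=T^{n}(\ti x_4)$; reading off the endpoint gives $N(f\circ\beta)=N(\beta)+n$. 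Combining this with the previous step,
\[
\cR_f(x_1,x_2,x_3,x_4)=N(f\circ\beta)-N(\beta)=n.
\]

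I expect the main obstacle to be the sign bookkeeping in the first step: verifying that the covering translation $T$ and the homology generator $[u]$ coincide (rather than differ by inversion), and that an orientation-preserving $f$ fixing $A$ pointwise acts trivially on $\pi_1(V)$ so that $\ti f T=T\ti f$. Once these orientation conventions are pinned down, everything else reduces to the three elementary properties of the intersection number and the equivariance of lifts, which are routine.
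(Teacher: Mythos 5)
Your proof is correct and follows essentially the same route as the paper: both arguments pass to the universal cover of $S^2\setminus A$, identify the deck generator $T$ with the homology generator $[u]$ via the intersection condition $\alpha\cdot u=+1$, and read off $n$ as the difference $\alpha\cdot(f\circ\beta)-\alpha\cdot\beta$ by lifting (your loop $L=\hat\beta*(f\circ\beta)$ is exactly the paper's projected path $\ti\gamma$ in disguise). Your explicit verification that $\ti f$ commutes with $T$ is a nice touch --- the paper uses this implicitly when asserting that $n$ is independent of the choice of lift $\ti x_4$ --- but it does not change the substance of the argument.
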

\begin{proof}
Let $\ti x_3$ be a lift of $x_3$ and let $\ti f$ be the lift of 
$f$ which fixes $\ti x_3.$ 
If $\alpha$ and $\beta$ are as in Definition~(\ref{def: cR})
choose $\ti \alpha$ and $\ti \beta$ lifts of $\alpha$ and $\beta$
with $\ti \beta$ running from $\ti x_3$ to a point which is a 
lift of $x_4.$  We denote this point by $\ti x_4$ and define
$n$ by $\ti f(\ti x_4) = T^n(\ti x_4)$.  The value of $n$ would
be the same if we used any other lift of $x_4$ in place of
$\ti x_4.$  

The path $\ti f \ti \beta$ runs from $\ti x_3$ to 
$\ti f(\ti x_4) = T^n(\ti x_4)$.  It is homotopic to
the concatenation of the path $\ti \beta$ from 
$\ti x_3$ to  $\ti x_4$ with a path $\ti \gamma$ from $\ti x_4$ 
to $T^n(\ti x_4).$  If $\gamma$ is the closed loop in $V$ obtained
by projecting $\ti \gamma$ then $[\gamma]$ is $n$ times the 
generator of $H_1(V)$ which has intersection number $+1$ with
$\alpha.$  Hence 
\[
\alpha \cdot (f \circ \beta) =  \alpha \cdot \gamma + \alpha \cdot \beta
\]
and
\begin{equation}\label{beta-gamma}
\cR_f(x_1,x_2,x_3,x_4) = \alpha \cdot (f \circ \beta) 
- \alpha \cdot \beta =  \alpha \cdot \gamma = n.
\end{equation}
\end{proof}

\begin{rem}
We note that $\cR_f(x_1,x_2,x_3,x_4)$ is sometimes well defined
even if the points $x_1,x_2,x_3,$ and $x_4$ are not all distinct. In particular
if $x_1 = x_2$ but $\{x_2,x_3,x_4\}$ are distinct then
$\cR_f(x_2,x_2,x_3,x_4) = 0$ since we may choose the constant
path for $\alpha.$ Similarly if $x_3 = x_4$ but $\{x_1, x_2,x_3\}$ 
are distinct then
$\cR_f(x_1,x_2,x_3,x_3) = 0$ since we may choose the constant
path for $\beta.$  If $x_2 = x_3$ or $x_1 = x_4$ the value of
$\cR_f$ is undefined for the general homeomorphism $f$.
However, if $f$ is a $C^1$ diffeomorphism we can define
$\cR_f$ when $x_2 = x_3$ or $x_1 = x_4$, but the value 
may no longer be an integer, or even rational.  We will do this below in
Section~(\ref{sec: cR-diff}).
\end{rem}

Since $\beta$ and $f \circ \beta$ have the same endpoints,
$x_3$ and $x_4$, we can concatenate the paths $f \circ \beta$
and $\beta^- = \beta(1-t)$ to form a loop $\gamma$ in $S^2 \setminus A$
which also defines $\cR_f(x_1,x_2,x_3,x_4).$

\begin{prop} \label{prop: bhat-loop}
If $\beta^-(t) = \beta(1-t)$ and $\gamma = (f \circ \beta) * \beta^-$ then 
\[
\cR_f(x_1,x_2,x_3,x_4) = \alpha \cdot \gamma.
\]
\end{prop}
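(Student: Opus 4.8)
The plan is to reduce the proposition to two formal properties of the algebraic intersection number: its additivity under concatenation of paths and its sign change under reversal of parametrization. First I would note that, since $x_3,x_4\in\Fix(f)$, the path $f\circ\beta$ again runs from $x_3$ to $x_4$, while $\beta^-$ runs from $x_4$ to $x_3$; hence $\gamma=(f\circ\beta)*\beta^-$ is a genuine closed loop in $S^2\setminus A$ based at $x_3$, and $\alpha\cdot\gamma$ is read off directly from $[\gamma]=(\alpha\cdot\gamma)[u]$ in $H_1(S^2\setminus A)$. The goal is then to show $[\gamma]=\big(\alpha\cdot(f\circ\beta)-\alpha\cdot\beta\big)[u]$, which by Definition~(\ref{def: cR}) is exactly $\cR_f(x_1,x_2,x_3,x_4)\,[u]$.

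The mechanism I would use is to introduce a single auxiliary path $\delta$ from $x_4$ to $x_3$ lying in the complement of the image of $\alpha$, and to use the same $\delta$ to close up both $f\circ\beta$ and $\beta$. Inserting the null-homotopic loop $\delta*\delta^-$ at the junction $x_4$ of the two pieces of $\gamma$ and regrouping gives the homotopy
\[
\gamma \;\simeq\; \big((f\circ\beta)*\delta\big)\,*\,\big(\delta^-*\beta^-\big),
\]
so that $[\gamma]=[(f\circ\beta)*\delta]+[\delta^-*\beta^-]$ in $H_1(S^2\setminus A)$. The first loop is precisely the closure of $f\circ\beta$ by $\delta$, so by the definition of the intersection number its class is $\alpha\cdot(f\circ\beta)\,[u]$. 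The second loop equals $(\beta*\delta)^-$, the reverse of the closure of $\beta$ by $\delta$; since reversing a loop negates its homology class (the loop analogue of the stated reversal property $\hat\alpha\cdot\beta=-\alpha\cdot\beta$, combined with skew-symmetry), its class is $-\alpha\cdot\beta\,[u]$. Adding the two yields $[\gamma]=\big(\alpha\cdot(f\circ\beta)-\alpha\cdot\beta\big)[u]$, and hence the proposition.

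The only point that needs genuine care is that the two summands $\alpha\cdot(f\circ\beta)$ and $\alpha\cdot\beta$ are defined only after closing the respective paths into loops, whereas $\alpha\cdot\gamma$ comes directly from the already-closed loop $\gamma$. The argument hinges on choosing one closing path $\delta$ disjoint from $\alpha$ and reusing it for both closures, so that the two copies of $\delta$ occurring inside $\gamma$ cancel in homology and contribute nothing to the intersection with $\alpha$; the accompanying basepoint shifts are invisible in $H_1(S^2\setminus A)$. Equivalently, and more cleanly, one may invoke the skew-symmetric pairing $i$ on relative homology: concatenation adds classes in $H_1(S^2\setminus A,B)$ and reversal negates them, so bilinearity of $i([\alpha],\,\cdot\,)$ gives $\alpha\cdot\gamma=\alpha\cdot(f\circ\beta)+\alpha\cdot\beta^-=\alpha\cdot(f\circ\beta)-\alpha\cdot\beta$ at once. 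I expect the bookkeeping of orientations and basepoints in the concatenation to be the only mildly delicate part; the rest is formal.
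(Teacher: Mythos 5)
Your proof is correct, but it takes a different route from the paper's. The paper gives no free-standing proof of Proposition~(\ref{prop: bhat-loop}): it is presented as immediate from the proof of Proposition~(\ref{prop: cR T^n}), where the lift $\ti f \ti\beta$ is split, rel endpoints, into $\ti\beta$ followed by a path $\ti\gamma$ from $\ti x_4$ to $T^n(\ti x_4)$; the projection $\gamma$ of $\ti\gamma$ satisfies $\alpha\cdot\gamma = n = \cR_f$ (equation~(\ref{beta-gamma})), and the loop $(f\circ\beta)*\beta^-$ is homologous to that $\gamma$ because its lift starting at $\ti x_3$ ends at $T^n(\ti x_3)$. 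You never touch the universal cover: you argue directly from Definition~(\ref{def: cR}), closing both $f\circ\beta$ and $\beta$ with a single auxiliary path $\delta$ disjoint from $\alpha$, inserting the null-homotopic $\delta*\delta^-$ at the junction $x_4$, and using additivity of classes in $H_1(S^2\setminus A)$ under concatenation together with negation under reversal. This is more self-contained---it uses only the formal bilinear properties of the intersection pairing, and it proves the identity for the chosen $\alpha$ and $\beta$ without ever invoking Proposition~(\ref{prop: cR T^n}) or deck transformations---whereas the paper's route comes essentially for free once Proposition~(\ref{prop: cR T^n}) is proved, and it keeps in view the identification of $\alpha\cdot\gamma$ with the deck-transformation exponent $n$, which is exactly what the subsequent proof of Proposition~(\ref{prop: gamma}) reuses. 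The one thing you assume implicitly---the existence of a closing path $\delta$ from $x_4$ to $x_3$ in the complement of the image of $\alpha$---is the same assumption the paper's definition of $\alpha\cdot\beta$ already makes, so it is not a gap in your argument relative to the paper.
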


An easy consequence of Proposition~(\ref{prop: cR T^n})
is another useful description
of $\cR_f.$  We use that fact that the space of homeomorphisms
fixing three distinct points $\{x_1, x_2, x_3\}$ is contractible
and so there is an isotopy $f_t(x)$ $f_0 = id$ and $f_1 = f$ 
such that for all $t \in [0,1],\ f_t(x_i) = x_i$ for $1 \le i \le 3.$ 
This isotopy is unique up to homotopy as a path in 
the space of homeomorphisms fixing the points $\{x_1, x_2, x_3\}$.

\begin{prop}\label{prop: gamma}
Suppose $\alpha$ is a path from $x_1$ to $x_2$ which
is disjoint from the set $\{x_3,x_4\}.$ 
Let $\gamma_0$ be the closed loop in $S^2 \setminus \{x_1, x_2, x_3\}$
defined by $f_t(x_4)$ for $t \in [0,1]$ then
\[
\cR_{f}(x_1,x_2,x_3,x_4) = \alpha \cdot \gamma_0.
\]
\end{prop}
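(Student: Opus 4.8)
The plan is to reduce the statement to Proposition~(\ref{prop: cR T^n}) by lifting the entire isotopy $\{f_t\}$ to the universal cover $\ti V$ of $V = S^2 \setminus A$ and reading off the integer $n$ from the trajectory of a lift of $x_4$. Throughout I would keep the notation of that proposition: let $p : \ti V \to V$ be the covering projection, $\ti x_3$ the chosen lift of $x_3$, $\ti f$ the lift of $f$ fixing $\ti x_3$, and $T$ the covering translation whose associated loop has intersection number $+1$ with $\alpha$.

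First I would lift the isotopy. Since each $f_t$ fixes $x_1$ and $x_2$, it restricts to a homeomorphism of $V$, and the rule $(\ti v, t) \mapsto f_t(p(\ti v))$ defines a homotopy $\ti V \times [0,1] \to V$ whose restriction at $t = 0$ is $p$ itself (because $f_0 = \mathrm{id}$). The homotopy lifting property for $p$, applied with initial lift $\mathrm{id} : \ti V \to \ti V$, produces a continuous family $\ti f_t : \ti V \to \ti V$ with $\ti f_0 = \mathrm{id}$ and with each $\ti f_t$ a lift of $f_t$. Next I would check that $\ti f_t(\ti x_3) = \ti x_3$ for all $t$: the path $t \mapsto \ti f_t(\ti x_3)$ is a lift of the constant path $t \mapsto f_t(x_3) = x_3$, and a lift of a constant path is constant, so it remains at $\ti x_3$. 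In particular $\ti f_1$ fixes $\ti x_3$, which forces $\ti f_1 = \ti f$, the distinguished lift appearing in Proposition~(\ref{prop: cR T^n}).

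Now I would trace the lift of $x_4$. Fixing a lift $\ti x_4$, consider the path $\ti \gamma_0(t) = \ti f_t(\ti x_4)$. It projects to $t \mapsto f_t(x_4) = \gamma_0(t)$, so it is precisely the lift of the loop $\gamma_0$ beginning at $\ti x_4$; its endpoint is $\ti f_1(\ti x_4) = \ti f(\ti x_4) = T^n(\ti x_4)$, where $n = \cR_f(x_1,x_2,x_3,x_4)$ by Proposition~(\ref{prop: cR T^n}). Thus $\ti \gamma_0$ is a path from $\ti x_4$ to $T^n(\ti x_4)$, which is exactly the configuration analyzed in the proof of that proposition: its projection $\gamma_0$ satisfies $[\gamma_0] = n[u]$ in $H_1(V)$, where $u$ is the generator with $\alpha \cdot u = +1$. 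Since $\gamma_0$ is already a loop, no closing path $\delta$ is needed, and the definition of the intersection number gives $\alpha \cdot \gamma_0 = n = \cR_f(x_1,x_2,x_3,x_4)$, as claimed.

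I expect the only delicate point—rather than a genuine obstacle—to be the bookkeeping that legitimizes the reduction: verifying that the lifted isotopy terminates at the specific lift $\ti f$ (handled by the constant-path argument above) and that the orientation conventions defining $T$ and $u$ agree with those of Proposition~(\ref{prop: cR T^n}), so that the displacement $T^n(\ti x_4)$ yields $+n$ rather than $-n$. Once the lifted trajectory $\ti\gamma_0$ is identified with the path $\ti\gamma$ of that earlier proof, the homological computation is identical and no new work is required.
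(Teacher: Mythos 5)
Your proof is correct and follows essentially the same route as the paper's: both reduce to Proposition~(\ref{prop: cR T^n}) by observing that the lift of $\gamma_0$ starting at $\ti x_4$ has the same endpoints as the path $\ti \gamma$ in that proof, so that $\alpha \cdot \gamma_0 = n = \cR_f(x_1,x_2,x_3,x_4)$. In fact your writeup supplies the details the paper leaves implicit—the homotopy lifting of the isotopy $\{f_t\}$ and the constant-path argument identifying $\ti f_1$ with the distinguished lift $\ti f$—which is exactly the bookkeeping needed to justify the paper's phrase ``$\gamma_0$ lifts to a path with the same endpoints as $\ti\gamma$.''
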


\begin{proof}
This follows immediately from the proof of 
Proposition~(\ref{prop: cR T^n}) because $\gamma_0$ lifts to 
a path $\ti \gamma_0$ in the universal covering 
space of $S^2 \setminus \{x_1,x_2\}$
with the same endpoints as the path $\ti \gamma$ in the 
proof of Proposition~(\ref{prop: cR T^n}). Hence loops $\gamma$
and $\gamma_0$ are homotopic in $S^2 \setminus \{x_1,x_2\}$
so from equation~(\ref{beta-gamma}) we have
\[
\cR_{f}(x_1,x_2,x_3,x_4) = \alpha \cdot \gamma = \alpha \cdot \gamma_0.
\]
\end{proof}

An important property of the function $\cR_f$ is that if $G$ is a subgroup
of $\Homeo(S^2)$ which  fixes a set $X$ pointwise then 
the function which assigns to $f \in G$ 
the function $\cR_f$ is a homomorphism.  For $f,g \in \Homeo(S^2)$
we will denote their composition by $fg.$

\begin{prop}\label{prop: group}
Suppose $\{x_1,x_2,x_3,x_4\}$ is a set of distinct points in 
$\Fix(f) \cap \Fix(g).$
Then
\[
\cR_{fg}(x_1,x_2,x_3,x_4) = \cR_f(x_1,x_2,x_3,x_4) 
+ \cR_g(x_1,x_2,x_3,x_4).
\]
\end{prop}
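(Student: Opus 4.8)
The plan is to work directly from the lift characterization in Proposition~(\ref{prop: cR T^n}). Fix $A = \{x_1,x_2\}$, set $V = S^2 \setminus A$, let $p:\ti V \to V$ be the universal cover, and let $T$ be the covering translation corresponding to a loop with intersection number $+1$ with a path from $x_1$ to $x_2$, exactly as in that proposition. Choose once and for all a lift $\ti x_3$ of $x_3$ and a lift $\ti x_4$ of $x_4$. Let $\ti f$ and $\ti g$ be the lifts of $f$ and $g$ that fix $\ti x_3$. By Proposition~(\ref{prop: cR T^n}) there are integers $n,m$ with $\ti f(\ti x_4) = T^n(\ti x_4)$ and $\ti g(\ti x_4) = T^m(\ti x_4)$, satisfying $\cR_f = n$ and $\cR_g = m$.

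The first step is to identify the correct lift of the composition $fg$. Since $\ti g(\ti x_3) = \ti x_3$ and $\ti f(\ti x_3) = \ti x_3$, the composition $\ti f \circ \ti g$ is a lift of $fg$ that fixes $\ti x_3$; hence it is precisely the lift $\widetilde{fg}$ used to compute $\cR_{fg}$. It therefore remains to evaluate $\ti f \circ \ti g$ at $\ti x_4$ and express the result as a power of $T$.

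The main obstacle --- and essentially the only point that needs checking --- is that $\ti f$ commutes with $T$. This is where orientation enters: conjugation $T \mapsto \ti f T \ti f^{-1}$ is an automorphism of the deck group $\la T \ra \cong \Z$, so it is either the identity or inversion, and the two cases are distinguished by how $f$ acts on $H_1(V) \cong \Z$. Because $f$ is an orientation-preserving homeomorphism fixing $x_1$ and $x_2$, it fixes the generator $[u]$ of $H_1(V)$; hence $f_* = \mathrm{id}$ on $H_1(V)$ and $\ti f T \ti f^{-1} = T$, that is, $\ti f \circ T = T \circ \ti f$.

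Granting this, the computation is immediate:
\[
\widetilde{fg}(\ti x_4) = \ti f(\ti g(\ti x_4)) = \ti f(T^m(\ti x_4)) = T^m(\ti f(\ti x_4)) = T^m(T^n(\ti x_4)) = T^{n+m}(\ti x_4),
\]
so that $\cR_{fg} = n+m = \cR_f + \cR_g$, as desired. An alternative route would argue directly from Definition~(\ref{def: cR}), writing $\cR_{fg} = \alpha \cdot (fg \circ \beta) - \alpha \cdot \beta$ as the telescoping sum $[\alpha \cdot (f \circ (g \circ \beta)) - \alpha \cdot (g \circ \beta)] + [\alpha \cdot (g \circ \beta) - \alpha \cdot \beta]$, observing that $g \circ \beta$ is again an admissible path from $x_3$ to $x_4$ in $S^2 \setminus A$ so the first bracket is $\cR_f$ computed with $g \circ \beta$ in place of $\beta$ and the second is $\cR_g$; but this relies on the path-independence of $\cR_f$, whereas the lift approach packages that independence automatically and isolates the sign issue as its single substantive point.
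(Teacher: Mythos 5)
Your proof is correct, but it takes a genuinely different route from the paper. You work from the covering-space characterization (Proposition~\ref{prop: cR T^n}): the lift of $fg$ fixing $\ti x_3$ is $\ti f \circ \ti g$, and additivity of the exponent of $T$ follows once $\ti f$ is known to commute with the deck transformation $T$; you correctly isolate this commutation as the one substantive point and settle it by noting that an orientation-preserving homeomorphism of $S^2$ fixing $x_1$ and $x_2$ acts as the identity on $H_1(S^2 \setminus \{x_1,x_2\}) \cong \Z$, so the conjugation automorphism $T \mapsto \ti f T \ti f^{-1}$ of the deck group cannot be inversion. The paper instead argues from the isotopy characterization (Proposition~\ref{prop: gamma}): choosing isotopies $f_t$, $g_t$ from the identity to $f$ and $g$ fixing $x_1, x_2, x_3$, the trajectory loop $\gamma_0(t) = f_t g_t(x_4)$ of the composed isotopy is homotopic to the concatenation of the loop $g_t(x_4)$ with the loop $f_t(x_4)$, and $\alpha \cdot \gamma_0$ then splits as $\cR_g + \cR_f$ by additivity of intersection numbers over concatenation. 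The trade-off: the paper's argument is shorter but leans on Proposition~\ref{prop: gamma}, and hence on the quoted-without-proof contractibility of the space of homeomorphisms fixing three points, which is what makes the isotopy exist and be essentially unique; your argument needs only Proposition~\ref{prop: cR T^n} plus standard covering-space theory, at the cost of having to address the sign of the conjugation action explicitly (a point a careless write-up would gloss over, and which is exactly where orientation-preservation enters). Your implicit use of the uniqueness of the lift fixing $\ti x_3$ (so that $\widetilde{fg} = \ti f \ti g$) is legitimate, since a deck transformation with a fixed point is the identity.
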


\begin{proof}
We use Proposition~(\ref{prop: gamma}).  As an isotopy from 
$id$ to $fg$ we choose $f_t g_t, \ t\in [0,1]$ where $f_t$ and
$g_t$ are isotopies from $id$ to $f$ and $g$ respectively, each
of which fixes the points $x_1,x_2,$ and $x_3.$
The loop $\gamma_0$ given by $f_tg_t(x_4)$ is homotopic to
the loop which is the concatenation of $\gamma_1$ given by
$g_t(x_4) = f_0g_t(x_4)$ with $\gamma_2$ given by
$f_t(x_4) = f_t(g_1(x_4)).$  
Hence 
\begin{align*}
\cR_{fg}(x_1,x_2,x_3,x_4) &= \alpha \cdot \gamma_0\\
&= \alpha \cdot \gamma_1 + \alpha \cdot \gamma_2\\
&= \cR_g(x_1,x_2,x_3,x_4)+ \cR_f(x_1,x_2,x_3,x_4).
\end{align*}
\end{proof}

\subsection{Basic Symmetries of $\cR_f$.}
We want to understand the effect permuting the variables
of $\cR_f.$ We first establish  basic relations --
equations (\ref{cR-tau}), (\ref{cR-sigma}),
(\ref{coboundary12}), and (\ref{coboundary34}), below.
In Section~(\ref{sec: perm}) we establish relations
applicable to any permutation of the variables of $\cR_f.$

We will write elements of
$S_4$, the symmetric group of order four,
in the standard way as a product of cycles.
For example $(12)(34)$ is the product the transposition
of $1$ and $2$ and the transposition of $3$ and $4$.
Recall that the group $S_4$ is generated by the three
transpositions $\sigma_i = (i,i+1),\ 1 \le i \le 3$.

If $x = (x_1,x_2,x_3,x_4)$ is a four-tuple of elements of $\Fix(f)$
we will denote the four-tuple 
$(x_{\sigma(1)}, x_{\sigma(2)}, x_{\sigma(3)}, x_{\sigma(4)})$
by $x_\sigma.$
An element of $S_4$ which we will have occasion to use several times is the
cyclic permutation $(123)$. We will denote this element by $\tau.$

\begin{prop}\label{prop: cR-sym}
Suppose $\{x_1,x_2,x_3,x_4\}$ is a set of distinct points in $\Fix(f).$
Then
\begin{gather}
\cR_f(x) + \cR_f(x_\tau) + \cR_f(x_{\tau^2}) = 0, \label{cR-tau} \\
\cR_f(x_{\sigma_1}) = \cR_f(x_{\sigma_3}) = -\cR_f(x) \label{cR-sigma}
\end{gather}
\end{prop}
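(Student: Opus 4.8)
The plan is to separate the two displayed identities: the transposition relations (\ref{cR-sigma}) follow quickly from Definition~(\ref{def: cR}) and the elementary reversal rule, while the cyclic relation (\ref{cR-tau}) carries the real content and is handled through Proposition~(\ref{prop: gamma}).

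For (\ref{cR-sigma}), I would argue directly. Applying $\sigma_1=(12)$ sends $x$ to $x_{\sigma_1}=(x_2,x_1,x_3,x_4)$, leaving the sets $A=\{x_1,x_2\}$ and $B=\{x_3,x_4\}$ unchanged; the sole effect on Definition~(\ref{def: cR}) is that the path from the first to the second argument becomes the reverse path $\hat\alpha$, with $\beta$ untouched. Since reversing a path negates its intersection number, both terms $\hat\alpha\cdot(f\circ\beta)$ and $\hat\alpha\cdot\beta$ acquire a sign, and the common factor $-1$ gives $\cR_f(x_{\sigma_1})=-\cR_f(x)$. The identity for $\sigma_3=(34)$ is the mirror image: now $\alpha$ is fixed and $\beta$ is replaced by its reverse $\hat\beta$, and because $f\circ\hat\beta$ is the reverse of $f\circ\beta$ the same cancellation produces the sign $-1$.

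The cyclic identity (\ref{cR-tau}) rests on the fact that $\tau=(123)$ fixes the fourth entry. In all three tuples $x=(x_1,x_2,x_3,x_4)$, $x_\tau=(x_2,x_3,x_1,x_4)$ and $x_{\tau^2}=(x_3,x_1,x_2,x_4)$ the last point is $x_4$ and the first three form the single set $\{x_1,x_2,x_3\}$. I would therefore fix one isotopy $f_t$ from the identity to $f$ fixing $x_1,x_2,x_3$ pointwise and set $\gamma_0(t)=f_t(x_4)$, a single loop serving all three terms. Proposition~(\ref{prop: gamma}) then gives
\[
\cR_f(x)=\alpha_{12}\cdot\gamma_0,\qquad \cR_f(x_\tau)=\alpha_{23}\cdot\gamma_0,\qquad \cR_f(x_{\tau^2})=\alpha_{31}\cdot\gamma_0,
\]
where $\alpha_{ij}$ runs from $x_i$ to $x_j$ and avoids $x_4$ together with the remaining first-three point. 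The three paths concatenate head to tail into a closed loop $\Lambda=\alpha_{12}*\alpha_{23}*\alpha_{31}$ based at $x_1$ and avoiding $x_4$; since intersection with a fixed loop is additive over concatenation, the three summands add to $\Lambda\cdot\gamma_0$. Finally $\Lambda$ is a closed $1$-cycle in $S^2$, hence null-homologous, and as the intersection pairing factors through $H_1(S^2)=0$ we get $\Lambda\cdot\gamma_0=0$, which is (\ref{cR-tau}).

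The step requiring the most care, and the main obstacle, is the orientation bookkeeping in the cyclic argument. Proposition~(\ref{prop: gamma}) measures $\alpha\cdot\gamma_0$ relative to the generator oriented positively about the \emph{first} argument, and this argument is a different point in each of the three tuples, so a priori the three intersection numbers are computed in three different annuli with three different sign conventions. I would resolve this by checking that in each case the convention agrees with the single geometric signed crossing count on $S^2$—a short local computation shows that a path leaving $x_i$ crosses the positively oriented small loop about $x_i$ with sign $+1$—so that all three numbers share one convention and the concatenation genuinely telescopes into $\Lambda\cdot\gamma_0$.
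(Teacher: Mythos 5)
Your proposal is correct and takes essentially the same approach as the paper: the transposition relations follow from Definition~(\ref{def: cR}) and the path-reversal sign rule, and the cyclic relation follows from Proposition~(\ref{prop: gamma}) by concatenating the three paths $\alpha_{12}*\alpha_{23}*\alpha_{31}$ into a closed loop whose intersection number with the single isotopy loop $\gamma_0$ vanishes because any two closed loops in $S^2$ have intersection number zero. Your closing remark on reconciling the sign conventions of the three annuli is a detail the paper passes over silently, and your resolution (all three agree with the geometric signed crossing count on $S^2$) is the right one.
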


\begin{proof}
Equation~(\ref{cR-sigma}) follows immediately from the definition
of $\cR_f$ since reversing the parametrization of $\alpha$
to get $\bar \alpha$, a path from $x_2$ to $x_1$ changes the
sign of the intersection number.  Hence
\begin{align*}
\cR_f(x_{\sigma_1}) &= \bar \alpha \cdot (f \circ \beta) 
-\bar \alpha \cdot \beta\\
&= - \alpha \cdot (f \circ \beta) + \alpha \cdot \beta\\
&= - \cR_f(x).
\end{align*}
The argument for $\sigma_3$ is similar.

To show equation~(\ref{cR-tau}) we choose paths 
$\alpha_1$ from $x_1$ to $x_2$,\ $\alpha_2$ from $x_2$ to $x_3$, 
and $\alpha_3$ from $x_3$ to $x_1.$  The concatenation of
these three paths is a closed loop $\eta$ in $S^2.$ Let $\gamma$
be as defined in Proposition~(\ref{prop: gamma}). Then
\[
\cR_f(x) + \cR_f(x_\tau) + \cR_f(x_{\tau^2}) 
= \alpha_1 \cdot \gamma + \alpha_2 \cdot \gamma + \alpha_3 \cdot \gamma
= \eta \cdot \gamma =0,
\]
since the intersection number of any two closed loops in $S^2$ 
is $0.$
\end{proof}

\begin{prop}\label{prop: cR-sym2}
Suppose $\{x_1,x_2,x_3,x_4,w\}$ is a set of distinct points in $\Fix(f).$
Then
\begin{align}
\cR_f(x_1,x_2,x_3,x_4) &= \cR_f(x_3,x_4,x_1,x_2), \label{(13)(24)}\\
\cR_f(x_1,x_2,x_3,x_4) &= \cR_f(x_1,w,x_3,x_4) + \cR_f(w,x_2,x_3,x_4),
\text{ \quad and }\label{coboundary12}\\
\cR_f(x_1,x_2,x_3,x_4) &= \cR_f(x_1,x_2,x_3,w) + \cR_f(x_1,x_2,w,x_4).
\label{coboundary34}
\end{align}
\end{prop}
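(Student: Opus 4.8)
The plan is to treat the three identities separately, since only the ``swap'' relation, equation~(\ref{(13)(24)}), carries real content; the two coboundary relations are additivity statements for the intersection number that follow by concatenating paths. I would also note at the outset that, once equation~(\ref{(13)(24)}) is available, it lets me deduce equation~(\ref{coboundary12}) from equation~(\ref{coboundary34}), so only one of the coboundary relations needs a direct argument.

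\textbf{The swap relation.} Apply Definition~(\ref{def: cR}) to the reordered tuple $(x_3,x_4,x_1,x_2)$, for which the role of $A$ is played by $\{x_3,x_4\}$ and that of $B$ by $\{x_1,x_2\}$. I may use $\beta$ itself as the path from $x_3$ to $x_4$ and $\alpha$ itself as the path from $x_1$ to $x_2$, so that $\cR_f(x_3,x_4,x_1,x_2) = \beta\cdot(f\circ\alpha) - \beta\cdot\alpha$. Using skew-symmetry this equals $\alpha\cdot\beta - (f\circ\alpha)\cdot\beta$, and so the desired equality $\cR_f(x_1,x_2,x_3,x_4) = \cR_f(x_3,x_4,x_1,x_2)$ reduces to the single identity
\[
\alpha\cdot(f\circ\beta) + (f\circ\alpha)\cdot\beta = 2\,(\alpha\cdot\beta).
\]
To prove this, I form the two loops $\mu = (f\circ\alpha)*\alpha^-$ (a loop in $S^2\setminus B$) and $\gamma = (f\circ\beta)*\beta^-$ (a loop in $S^2\setminus A$, as in Proposition~(\ref{prop: bhat-loop})). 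Regarding $\mu$ and $\gamma$ as $1$-cycles in $S^2$ and expanding the intersection pairing bilinearly gives
\[
\mu\cdot\gamma = (f\circ\alpha)\cdot(f\circ\beta) - (f\circ\alpha)\cdot\beta - \alpha\cdot(f\circ\beta) + \alpha\cdot\beta.
\]
Since $H_1(S^2)=0$, the intersection number of any two loops in $S^2$ vanishes, so $\mu\cdot\gamma = 0$; combining this with the orientation-invariance identity $(f\circ\alpha)\cdot(f\circ\beta) = \alpha\cdot\beta$ yields precisely the displayed identity, and hence equation~(\ref{(13)(24)}).

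\textbf{The coboundary relations.} For equation~(\ref{coboundary34}) I keep $A=\{x_1,x_2\}$ fixed and use a single path $\alpha$ from $x_1$ to $x_2$ avoiding $\{x_3,x_4,w\}$, splitting the $\beta$-path at $w$: choose $\beta_1$ from $x_3$ to $w$ and $\beta_2$ from $w$ to $x_4$, both in $S^2\setminus A$, and set $\beta = \beta_1*\beta_2$. Because the algebraic intersection number against the fixed path $\alpha$ is additive under concatenation of the second path, and $f\circ\beta = (f\circ\beta_1)*(f\circ\beta_2)$, subtracting gives
\[
\cR_f(x_1,x_2,x_3,x_4) = \bigl[\alpha\cdot(f\circ\beta_1)-\alpha\cdot\beta_1\bigr] + \bigl[\alpha\cdot(f\circ\beta_2)-\alpha\cdot\beta_2\bigr],
\]
which is exactly the right-hand side of equation~(\ref{coboundary34}). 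Finally, applying equation~(\ref{coboundary34}) to the tuple $(x_3,x_4,x_1,x_2)$ with split point $w$ and rewriting each of the three terms by equation~(\ref{(13)(24)}) produces equation~(\ref{coboundary12}).

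\textbf{Main obstacle.} The coboundary steps are routine once additivity of the intersection number under concatenation is recorded. The crux, and the step I expect to demand the most care, is the symmetric identity underlying equation~(\ref{(13)(24)}): I must justify that the bilinear expansion of $\mu\cdot\gamma$ is legitimate—here the point is that the four endpoints lie in the disjoint sets $A$ and $B$, so the paths $\alpha,f\circ\alpha$ and $\beta,f\circ\beta$ can be placed in transverse general position and each term is the intersection number already defined—and that this loop intersection number genuinely vanishes in $S^2$.
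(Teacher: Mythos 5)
Your proposal is correct, but your route to equation~(\ref{(13)(24)}) is genuinely different from the paper's. The paper derives the swap relation from Proposition~(\ref{prop: group}): since $f \mapsto \cR_f$ is a homomorphism, $\cR_{f^{-1}} = -\cR_f$, and after normalizing the paths so that $\alpha \cdot \beta = 0$ (possible since an embedded arc does not separate $S^2$) one computes $\cR_f(x_1,x_2,x_3,x_4) = \alpha \cdot (f\circ\beta) = -(f\circ\beta)\cdot\alpha = -\beta\cdot(f^{-1}\circ\alpha) = -\cR_{f^{-1}}(x_3,x_4,x_1,x_2) = \cR_f(x_3,x_4,x_1,x_2)$, using naturality under the orientation preserving homeomorphism $f^{-1}$. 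You instead prove the stronger, unnormalized identity $\alpha\cdot(f\circ\beta) + (f\circ\alpha)\cdot\beta = 2\,(\alpha\cdot\beta)$ by expanding the vanishing intersection number of the loops $\mu = (f\circ\alpha)*\alpha^-$ and $\gamma = (f\circ\beta)*\beta^-$ bilinearly and invoking $(f\circ\alpha)\cdot(f\circ\beta) = \alpha\cdot\beta$. This is self-contained: it uses only the three elementary facts about intersection numbers listed in Section 1 together with the vanishing of loop-loop intersection numbers in $S^2$ (the same fact the paper invokes in proving Proposition~(\ref{prop: cR-sym})), and it avoids both Proposition~(\ref{prop: group}) (hence the existence and uniqueness of isotopies fixing three points) and the need to choose $\alpha\cdot\beta = 0$. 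The price is that you must justify the bilinear expansion, i.e. that $[\mu] = [f\circ\alpha] - [\alpha]$ in $H_1(S^2\setminus B, A)$, that $[\gamma] = [f\circ\beta] - [\beta]$ in $H_1(S^2\setminus A, B)$, and that the pairing $i$ is bilinear; you flag this correctly, and it is at the same level of rigor as the paper's own uses of additivity of $\cdot$ under concatenation. For the coboundary relations your argument is the paper's, mirrored: the paper splits $\alpha$ at $w$ to prove equation~(\ref{coboundary12}) directly and deduces equation~(\ref{coboundary34}) from it via equation~(\ref{(13)(24)}), while you split $\beta$ at $w$ to prove equation~(\ref{coboundary34}) directly and deduce equation~(\ref{coboundary12}) the same way; the two are interchangeable.
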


\begin{proof}
To show equation~(\ref{(13)(24)}) we note that Proposition~(\ref{prop: group})
implies $\cR_f = - \cR_{f^{-1}}.$  Let $\alpha$ (respectively $\beta$) be
a path from $x_1$ to $x_2$ (respectively $x_3$ to $x_4$) which
we choose so that $\alpha \cdot \beta = 0.$  Then
\begin{align*}
\cR_f(x_1,x_2,x_3,x_4) &= \alpha \cdot (f \circ \beta)\\
&= - (f \circ \beta) \cdot \alpha \text{\quad since $\ \cdot\ $ 
is skew-symmetric,}\\
&= - \beta \cdot (f^{-1} \circ \alpha)\\
&= - \cR_{f^{-1}}(x_3,x_4,x_1,x_2)\\
&= \cR_{f}(x_3,x_4,x_1,x_2).
\end{align*}

To show equation~(\ref{coboundary12}) we
choose paths $\alpha_1$ from $x_1$ to $w$ and $\alpha_2$ from $w$ to $x_2$ 
and let $\alpha$ be the path from $x_1$ to $x_2$ obtained by
concatenating them.  Then
\begin{align*}
\cR_f(x_1,x_2,x_3,x_4) &= \alpha \cdot (f \circ \beta) - \alpha \cdot \beta\\
&= \alpha_1 \cdot (f \circ \beta) - \alpha_1 \cdot \beta 
+ \alpha_2 \cdot (f \circ \beta) - \alpha_2 \cdot \beta\\
&= \cR_f(x_1,w,x_3,x_4) + \cR_f(w,x_2,x_3,x_4).
\end{align*}
Equation~(\ref{coboundary34}) follows from equations~(\ref{(13)(24)})
and (\ref{coboundary12}).
\end{proof}

\subsection{$\cR_f$ for diffeomorphisms} \label{sec: cR-diff}

For a homeomorphism $f$ the value of $\cR_f(x_1,x_2,x_3,x_4)$ is
generally undefined if either of $x_1,x_2$ is equal to either
of $x_3,x_4.$ In the case of $C^1$ diffeomorphisms we can remove
this restriction.  This done essentially by incorporating the
infinitesimal rotation number at the fixed point, say $x_1 = x_3$,
in question.

Suppose $p \in \Fix(f)$. Let $\alpha: [0,1] \to S^2$ and 
$\beta: [0,1] \to S^2$ be
$C^1$ embeddings with $\alpha(0) = \beta(0) = p$ with
distinct tangent vectors at $p.$  We can blow up the point $p$ to obtain
a map $\hat f$ on the compactification of $S^2 \setminus \{p\}.$
The action of $\hat f$ on the circle which compactifies $S^2 \setminus \{p\}$
is the projectivization of $Df_p$. We denote the compactified 
space (which is topologically a closed disk) by $M.$
The points on the boundary of $M$ which correspond to the tangents
to $\alpha$ and $\beta$ at $p$ are distinct and the ends of 
paths $\ha$ and $\hb$ which agree with $\alpha$ and $\beta$
on $(0,1].$
We define $\alpha \cdot \beta$ to be $\ha \cdot \hb.$

We can now define $\cR_f(p, x_2, p, x_4)$.  Choose embeddings
$\alpha$ and $\beta$ satisfying 
\begin{enumerate}
\item[(1)]  $\alpha(0) = \beta(0) = p \in \Fix(f).$ 
\item[(2)] $\alpha(1) = x_2, \beta(1) = x_4$ with $x_1, x_2 \in \Fix(f)$
\item[(3)] If $v$ and $w$ are the tangents at $p$
to $\alpha$ and $\beta$ respectively then
\[
\frac{df_p^n(v)}{\|df_p^n(v)\|} \ne \frac{w}{\|w\|},
\]
for all $n \in \Z.$
\end{enumerate}

\begin{lemma}
Suppose the paths $\alpha$ and $\beta$ satisfy (1) - (3) above.
Then the limit 
\[
= \lim_{n \to \infty} \frac{\alpha \cdot (f^n \circ \beta)}{n}
\]
exists and is independent of the choice of $\alpha$ and $\beta$.
\end{lemma}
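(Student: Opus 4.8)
The plan is to evaluate $\ha \cd (\hat f^n \circ \hb)$ inside the blown-up disk $M$, pass to a universal cover, and recognize the limit as a rotation number of the boundary circle map. Write $C = \partial M$ for the boundary circle and $g \colon C \to C$ for the orientation-preserving circle homeomorphism induced by $Df_p$; being an orientation-preserving circle homeomorphism it has a well-defined rotation number. Let $v, w$ be the tangent directions of $\alpha, \beta$ at $p$, viewed as points of $C$. Since $\hat f$ acts on $C$ by $g$, the path $\hat f^n \circ \hb$ meets $C$ at $g^n(w)$, and condition~(3) says precisely that $g^n(w) \ne v$ for all $n \in \Z$. Hence $\ha$ and $\hat f^n \circ \hb$ always have distinct boundary endpoints, and $\alpha \cd (f^n \circ \beta) = \ha \cd (\hat f^n \circ \hb)$ is defined for every $n$.

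Next I would delete the interior fixed point $x_2$ to obtain an annulus $\mathcal A = M \setminus \{x_2\}$ with boundary circle $C$ and a puncture at $x_2$, and pass to its universal cover $\ti{\mathcal A}$ with deck generator $T$ having intersection number $+1$ with $\alpha$. Choose the lift $\ti{\hat f}$ of $\hat f$ fixing a lift $\ti x_4$ of $x_4$; since $\ti{\hat f}$ commutes with $T$, this lift does not depend on the chosen lift of $x_4$, so $\ti{\hat f}$ is canonical. On the boundary line $\ti C$ the map $\ti{\hat f}$ restricts to a lift $G$ of $g$; let $\rho$ denote its rotation number. Lifting $\hb$ to $\ti\hb$ ending at $\ti x_4$, the path $\ti{\hat f}^n \circ \ti\hb$ runs from $G^n(\ti w) \in \ti C$ to $\ti{\hat f}^n(\ti x_4) = \ti x_4$.

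The translates $T^k \ti\ha$ are disjoint, properly embedded arcs from $C$ to the puncture that cut $\ti{\mathcal A}$ into fundamental strips, and $\ha \cd (\hat f^n \circ \hb)$ equals the signed number of these arcs separating the two endpoints of $\ti{\hat f}^n \circ \ti\hb$. The terminal point $\ti x_4$ lies in a fixed strip, while the initial point $G^n(\ti w)$ lies in the strip indexed by $\lfloor G^n(\ti w) - \ti v \rfloor$, where $\ti v$ is the endpoint of $\ti\ha$ on $\ti C$. Hence $\ha \cd (\hat f^n \circ \hb)$ differs from $-\lfloor G^n(\ti w) - \ti v \rfloor$ by an amount bounded independently of $n$, and dividing by $n$ and using $\lim_n G^n(\ti w)/n = \rho$ yields
\[
\lim_{n \to \infty} \frac{\alpha \cd (f^n \circ \beta)}{n} = -\rho,
\]
the sign being fixed by the orientation convention for the generator $[u]$.

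Independence of the choices is then immediate: $-\rho$ depends only on $\hat f$ through the canonical lift $\ti{\hat f}$; changing $\alpha$ only relocates the reference arc and shifts $\ti v$ by a bounded amount, while changing $\beta$ only changes the starting lift $\ti w$, neither of which affects the $n \to \infty$ limit. I expect the main obstacle to be the bookkeeping of the third paragraph: verifying rigorously that the algebraic intersection number downstairs is computed by the difference of the strip indices of the two lifted endpoints, with an error bounded uniformly in $n$, and pinning down the sign against the orientation conventions. The remaining ingredients — that $g$ is an orientation-preserving circle homeomorphism admitting a rotation number, and that $\lim_n G^n(\ti w)/n = \rho$ — are standard.
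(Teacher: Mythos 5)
Your proposal is correct and takes essentially the same route as the paper: blow up at $p$, form the annulus between $p$ and $x_2$, take the canonical lift of the blown-up map fixing a lift of $x_4$, and identify the limit with minus the translation number of the induced map on the lifted boundary circle via a bounded-error comparison between the intersection number and the boundary displacement. The only cosmetic differences are that the paper blows up $x_2$ as well (rather than puncturing there), and it simply asserts the bounded error ($|K|\le 2$) where you sketch the fundamental-strip bookkeeping that justifies it.
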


\begin{proof}
Blow up the points $p$ and $x_2$ to form a closed
annulus $\A.$ Choose the lift $\ti f :\ti \A \to \ti \A$ which
fixes a lift of $x_4.$ 
Let $C$ be the circle added when $p$ was blown up; so $C$ is 
one component of the boundary
of $\A$. Parametrize $C$ so that $\ha(0)$ is $0$ and let 
$\ti C$ be the universal cover of $C$ which we may think of as
a component of the boundary of $\ti A.$ 

Then $\ti f|_{\ti C}$ is a lift of $f|_C$ and it has a well defined
translation number $\tau(\ti f)$ defined to be
\[
\tau(\ti f) = \lim_{n \to \infty} \frac{ \ti f^n(\ti p_2) - \ti p_2}{n}
\]
where $\ti p_2$ is the lift of the endpoint $\beta(0).$  This limit
always exists by the standard theory of rotation numbers for circle
homeomorphisms.  The annulus $\A$ depends only on $p$ and $x_2$.
The lift $\ti f$ is determined by $x_4.$ 

Finally we note that 
\[
\big (\alpha \cdot (f^n \circ \beta) - \alpha \cdot \beta \big)
= - \big (\ti f^n(\ti p_2) - \ti p_2 \big ) +K,
\]
where $K$ satisfies $|K| \le 2.$
It follows that 
\[
\lim_{n \to \infty} 
\frac{\alpha \cdot (f^n \circ \beta) - \alpha \cdot \beta}{n} = -\tau(\ti f).
\]
\end{proof}

\begin{defn}
We define 
\[
\cR_f(p, x_2, p, x_4) 
= \lim_{n \to \infty} 
\frac{\alpha \cdot (f^n \circ \beta) - \alpha \cdot \beta}{n}.
\]
\end{defn}

The number $\cR_f(x_1, p, x_3, p)$ is defined analogously
as are the values of $\cR_f$ when one of first two and
one of its second two variables are equal to $p.$
We also observe that if $p_1, p_2 \in \Fix(f)$ then 
$\cR_f(p_1, p_2, p_1, p_2)$ is also defined and equal
to the difference of the rotation numbers of $\hat f$
on the two boundary components of the annulus $\A$
obtained by blowing up both $p_1$ and $p_2.$

It is easy to check that $\cR_f$ as defined still satisfies
the symmetries of Propositions~(\ref{prop: cR-sym2}) and 
(\ref{prop: cR-sym2}).

\subsection{$\cR_f$ for periodic points}

From Proposition~(\ref{prop: group}) it is clear that
for $q \in \Z$ we have $\cR_{f^q} = q \cR_f$. It follows
that if $\{x_1,x_2,x_3,x_4\}$ is a subset of distinct
points in $\Fix(f)$, then
\[
\cR_{f}(x_1,x_2,x_3,x_4) = \frac{1}{q}\cR_{f^q}(x_1,x_2,x_3,x_4).
\]

\begin{defn} Suppose $\{x_1,x_2,x_3,x_4\}$ is a subset of distinct
points in $\Fix(f^q)$. We define
\[
\cR_{f}(x_1,x_2,x_3,x_4) = \frac{1}{q}\cR_{f^q}(x_1,x_2,x_3,x_4).
\]
\end{defn}

Because of the linearity of this definition it is clear that
Propositions (\ref{prop: group}), (\ref{prop: cR-sym}), and 
(\ref{prop: cR-sym2}) all remain valid when the argument
of $\cR_f$ is $x = (x_1,x_2,x_3,x_4),$ a $4$-tuple of distinct points
in $\Per(f)$, the set of periodic points of $f.$



\section{The complete symmetries of $\cR_f.$}\label{sec: perm}
If $X$ is a set we want to investigate the class of functions of 
four variables 
$\cF: X^4 \to \R$ which possess certain symmetries under permutation
of the variables.  The simplest example with the symmetries
which interest us occurs when $X = \R$ and
we define the quadratic function $q: \R^4 \to \R$
by $q(x_1,x_2,x_3,x_4) = (x_1-x_2)(x_3-x_4).$

As before we will write elements of
$S_4$ in the standard way as a product of cycles and we
recall that $\sigma_i$ denotes the transposition $(i,i+1),\ 1 \le i \le 3$.
The group $S_4$ is generated by these three transpositions 

If $x = (x_1,x_2,x_3,x_4)$ is a four-tuple of elements of some
set we will denote the four-tuple 
$(x_{\sigma(1)}, x_{\sigma(2)}, x_{\sigma(3)}, x_{\sigma(4)})$
by $x_\sigma.$
As before an element of $S_4$ which we will denote the
cyclic permutation $(123)$ by $\tau.$

We are interested in the vector space of all
functions $\cF:X^4 \to \R$ which satisfy the 
following three equations.
\begin{gather}
\cF(x) + \cF(x_\tau) + \cF(x_{\tau^2}) = 0, \label{tau} \\
\cF(x_{\sigma_1}) = \cF(x_{\sigma_3}) = -\cF(x) \label{sigma}\\
\cF(x_1,x_2,x_3,x_4) = \cF(x_1,w,x_3,x_4) + \cF(w,x_2,x_3,x_4),
\text{\quad for all $w \in X$}.\label{coboundary}
\end{gather}

Equation~(\ref{tau}) says
that if we cyclically permute the first three arguments of $\cF$
and sum the values, the result is $0.$  This is a kind of Jacobi
identity for $\cF$.  Equation~(\ref{sigma}) says
that $\cF$ is skew-symmetric in its first two variables and
in its last two.
Equation~(\ref{coboundary}) says that as a function of its
first two variables $\cF$ is a coboundary.
We remark that it will be shown below (see Remark~(\ref{remark}))
that a consequence
of these relations is that $\cF(x_1,x_2,x_3,x_4) = \cF(x_3,x_4,x_1,x_2)$
so it follows that  $\cF$ is also a coboundary in its last two
variables, i.e., for all $w \in X,$
\[
\cF(x_1,x_2,x_3,x_4) = \cF(x_1,x_2,x_3,w) + \cF(x_1,x_2,w,x_4).
\]

To understand the  symmetries of $\cF$ is useful to 
introduce a function to $\R^3$ containing the three
values obtained by cyclically permuting the first
three arguments of $\cF.$
Hence we define the function $\F: X^4 \to \R^3$ by
\begin{equation}
\F(x) = \big ( \cF(x), \cF(x_\tau), \cF(x_{\tau^2}) \big )
\end{equation}

An immediate
consequence of equation~(\ref{tau}) is the fact that
the image of $\F$ lies in the subspace of $\R^3$
given by $y_1 + y_2 + y_3 = 0.$

The symmetries of $\cF$ under the action of $S_4$ permuting
its arguments are most easily described by means of a representation
of $S_4$ in $GL(3,\Z).$
Since the group $S_4$ is generated by the three
transpositions $\sigma_i = (i,i+1),\ 1 \le i \le 3$, we can
define a representation $\Theta$ by specifying its value on these three
elements of $S_4$. We define
\begin{gather*}
\Theta(\sigma_1) = \Theta(\sigma_3) = 
\begin{bmatrix}
-1 & 0 & 0\\
0 & 0 & -1\\
0 & -1 & 0\\
\end{bmatrix},\\
\Theta(\sigma_2) = 
\begin{bmatrix}
0 & 0 & -1\\
0 & -1 & 0\\
-1 & 0 & 0\\
\end{bmatrix}.
\end{gather*}

\begin{prop}
The function $\Theta$ defined for $\sigma_i,$\ $1 \le i \le 3,$
determines a homomorphism from $\Theta: S_4 \to GL(3,\Z).$
\end{prop}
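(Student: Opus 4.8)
The plan is to invoke the universal property of a group presentation (von Dyck's theorem): since $S_4$ is generated by $\sigma_1,\sigma_2,\sigma_3$, an assignment of matrices to these generators extends to a homomorphism precisely when the assigned matrices satisfy the defining relations of $S_4$ in the Coxeter presentation. Concretely, $S_4$ is presented by the three adjacent transpositions subject to
\[
\sigma_i^2 = e, \qquad
\sigma_i\sigma_{i+1}\sigma_i = \sigma_{i+1}\sigma_i\sigma_{i+1}, \qquad
\sigma_1\sigma_3 = \sigma_3\sigma_1,
\]
so it suffices to check that $\Theta(\sigma_1),\Theta(\sigma_2),\Theta(\sigma_3)$ satisfy the corresponding matrix identities; uniqueness of the extension is then automatic.

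First I would verify the involution relations $\Theta(\sigma_i)^2 = I$ for $i=1,2,3$ by squaring the two given matrices, each of which is readily seen to be its own inverse. The commuting relation $\Theta(\sigma_1)\Theta(\sigma_3) = \Theta(\sigma_3)\Theta(\sigma_1)$ requires no computation at all, since $\Theta(\sigma_1)=\Theta(\sigma_3)$ by definition and any matrix commutes with itself. This disposes of every relation except the two braid relations.

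The one genuine computation is the braid relation $\Theta(\sigma_1)\Theta(\sigma_2)\Theta(\sigma_1) = \Theta(\sigma_2)\Theta(\sigma_1)\Theta(\sigma_2)$, which (given the involution relations) is equivalent to $\bigl(\Theta(\sigma_1)\Theta(\sigma_2)\bigr)^3 = I$. I would establish this by computing the product $\Theta(\sigma_1)\Theta(\sigma_2)$ and observing that it is the cyclic permutation matrix sending $e_1\mapsto e_2\mapsto e_3\mapsto e_1$, so its cube is $I$. The second braid relation, for the pair $\sigma_2,\sigma_3$, reduces immediately to the first: because $\Theta(\sigma_3)=\Theta(\sigma_1)$ and $\Theta(\sigma_1)^2=I$, the matrix $\Theta(\sigma_2)\Theta(\sigma_3)=\Theta(\sigma_2)\Theta(\sigma_1)$ is conjugate, via $\Theta(\sigma_1)$, to $\Theta(\sigma_1)\Theta(\sigma_2)$ and hence also has order three. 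Finally, since both generating matrices have integer entries and determinant $+1$, every element of the image lies in $GL(3,\Z)$, completing the verification.

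I do not anticipate a serious obstacle: the whole argument is a finite check of Coxeter relations, and the only multiplication demanding care is the single braid identity, which collapses once one recognizes $\Theta(\sigma_1)\Theta(\sigma_2)$ as an order-three permutation matrix. A useful sanity check, and an alternative conceptual route, is to identify $\Theta$ with the signed permutation action of $S_4$ on the three partitions of $\{1,2,3,4\}$ into two pairs, which the coordinates $\bigl(\cF(x),\cF(x_\tau),\cF(x_{\tau^2})\bigr)$ implicitly index; this explains a priori why such a three-dimensional integral representation should exist.
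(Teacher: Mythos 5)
Your proposal is correct and follows essentially the same route as the paper: both verify that the assigned matrices satisfy the Coxeter relations $\sigma_i^2 = e$, $\sigma_1\sigma_3 = \sigma_3\sigma_1$, and the braid relations, which form a complete set of defining relations for $S_4$, so the assignment extends (uniquely) to a homomorphism. The only difference is that you carry out the matrix computations the paper dismisses as ``easily checked,'' including the nice observation that $\Theta(\sigma_1)\Theta(\sigma_2)$ is the order-three cyclic permutation matrix.
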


\begin{proof}
The set $\sigma_1,\sigma_2,\sigma_3 \in S_4$ is a set of
generators.
It is easily checked that $\Theta$ respects the relations
$\sigma_i^2 = id,\ \sigma_1 \sigma_3 = \sigma_3 \sigma_1$
and $\sigma_i \sigma_{i+1} \sigma_i
= \sigma_{i+1} \sigma_i \sigma_{i+1},$ which are
a complete set of relations defining $S_4$ so 
$\Theta$ is a homomorphism.  
\end{proof}

\begin{rem}\label{remark}
Since the transposition $(13) = \sigma_1 \sigma_2 \sigma_1$ and
$(24) = \sigma_2 \sigma_3 \sigma_2$ it is easy to calculate
\[
\Theta((13)) = \Theta((24)) = 
\begin{bmatrix}
0 & -1 & 0\\
-1 & 0 & 0\\
0 & 0 & -1\\
\end{bmatrix},
\]
and hence that $\Theta((13)(24)) = I.$
It is easy to check that the kernel of $\Theta$
is a group isomorphic to $\Z/2\Z \oplus \Z/2\Z$ which is generated
by the elements $(12)(34)$ and $(13)(24)$ of $S_4.$  One can
also check that the image of $\Theta$ is isomorphic to $S_3.$
\end{rem}

\begin{thm}\label{F-symmetry}
Suppose for all $x \in X^4$ the function $\cF: X^4 \to \R$ satisfies
equations (\ref{tau}) and (\ref{sigma}) and we define $\F: X^4 \to \R^3$
by
\[
\F(x) = \big ( \cF(x), \cF(x_\tau), \cF(x_{\tau^2}) \big ).
\]
Then for every $\sigma \in S_4$ and every $x \in X^4$
\begin{equation}\label{F-theta}
\F(x_{\sigma})
= \Theta(\sigma) \F(x).
\end{equation}
\end{thm}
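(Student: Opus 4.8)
The plan is to verify equation~(\ref{F-theta}) first on the three generators $\sigma_1,\sigma_2,\sigma_3$ of $S_4$ and then to propagate it to every permutation. For the propagation step I would show that the set $G=\{\sigma\in S_4 : \F(x_\sigma)=\Theta(\sigma)\F(x)\ \text{for all } x\}$ is a subgroup. Closure comes from writing $x_{\sigma\rho}=(x_\sigma)_\rho$ and applying the single-permutation identity twice in succession; since $\Theta$ has already been shown to be a homomorphism, the two matrix factors reassemble into $\Theta(\sigma\rho)$. The one point requiring care is the order in which the matrices $\Theta(\sigma_i)$ accumulate as one peels off generators, which must be matched to the composition convention so that it agrees with the homomorphism $\Theta$. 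Granting this bookkeeping, since $G$ contains the generators it is all of $S_4$, and the theorem follows.

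To check the generators I would first restate the hypotheses in terms of the coordinates of $\F$. Writing $\F(x)=(a,b,c)$ with $a=\cF(x)$, $b=\cF(x_\tau)$, $c=\cF(x_{\tau^2})$, equation~(\ref{tau}) becomes $a+b+c=0$, while equation~(\ref{sigma}) says that transposing the first two or the last two entries of any $4$-tuple reverses the sign of $\cF$. For $\sigma_1$ and $\sigma_2$ the computation stays inside the first three coordinates: applying (\ref{sigma}) to the cyclically permuted tuples $x_\tau$ and $x_{\tau^2}$ gives $\cF(x_{(13)})=-b$ and $\cF(x_{(23)})=-c$, and assembling the three entries of $\F(x_{\sigma_1})$ and of $\F(x_{\sigma_2})$ then reproduces exactly $\Theta(\sigma_1)(a,b,c)=(-a,-c,-b)$ and $\Theta(\sigma_2)(a,b,c)=(-c,-b,-a)$.

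The genuine work is the case $\sigma_3=(34)$, and I expect this to be the main obstacle. Here the entries of $\F(x_{\sigma_3})$ are values of $\cF$ at tuples in which $x_4$ has been displaced from the last slot, and such values are \emph{not} reached from $a,b,c$ by sign changes alone. My plan is to evaluate them by organizing the permuted tuples into the orbits of the sign-flip moves coming from (\ref{sigma}), on each of which $\cF$ is constant up to sign, and then using a few well-chosen instances of the cyclic relation (\ref{tau}) to link these orbits. Applying (\ref{tau}) to the unknown tuples yields relations that, taken alone, merely re-express skew-symmetry and leave the system underdetermined; the key is to also apply (\ref{tau}) to a tuple already known up to sign (for instance $x_{\sigma_3}$ itself), producing the extra independent equation. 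Solving the resulting small linear system and then invoking $a+b+c=0$ collapses the displaced values to $\cF(x_4,x_1,x_2,x_3)=-b$ and $\cF(x_2,x_4,x_1,x_3)=-c$, so that $\F(x_{\sigma_3})=(-a,-c,-b)=\Theta(\sigma_3)\F(x)$, as required.

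With the three generator cases in hand, the subgroup argument of the first paragraph finishes the proof, and one records as corollaries the special values $\Theta((13)(24))=I$ noted in Remark~(\ref{remark}), which is exactly the content hidden in the $\sigma_3$ computation once $a+b+c=0$ is used.
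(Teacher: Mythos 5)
Your proposal is correct and takes essentially the same route as the paper: reduce to the generators via the homomorphism property of $\Theta$ (your subgroup argument is the paper's one-line reduction, and both of you rely on the same composition-convention bookkeeping that you at least flag explicitly), verify $\sigma_1,\sigma_2$ by sign flips staying within the first three coordinates, and settle $\sigma_3$ by solving a small linear system extracted from (\ref{tau}) and (\ref{sigma}), arriving at exactly the paper's values $\cF(x_4,x_1,x_2,x_3)=-b$ and $\cF(x_2,x_4,x_1,x_3)=-c$ (the paper's conclusion $t=s$). One small correction to your sketch of the $\sigma_3$ case: applying (\ref{tau}) to $x_{\sigma_3}$ itself only relates the two unknown entries of $\F(x_{\sigma_3})$ to each other and leaves one free parameter, so it is not by itself the extra independent equation; the paper must apply (\ref{tau}) to two further tuples, $(x_3,x_2,x_4,x_1)$ and $(x_3,x_1,x_4,x_2)$, whose entries are known up to sign flips, before the system determines $t$ --- a matter of bookkeeping inside the same strategy rather than a different method.
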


We postpone the proof of this result to the next section.
Note that this result about the symmetries of $\cF$ and $\F$ requires
only that $\cF$ satisfy equations (\ref{tau}) and (\ref{sigma}).
For the next result
we will additionally use the coboundary condition, equation~(\ref{coboundary}).
We note that given {\em any} function of two variables
$g: X^2 \to \R$ if we define 
\[
\cF(x)  = g(x_1,x_3) - g(x_1,x_4)-  g(x_2,x_3) + g(x_2,x_4),
\]
then it is straightforward to check that $\cF$ satisfies 
equations (\ref{tau}) and (\ref{sigma}). The next theorem asserts that
the converse of this is also true.

\begin{thm}\label{thm: g-function}
Suppose for all $x \in X^4$ the function $\cF: X^4 \to \R$ satisfies
equations (\ref{sigma}), and (\ref{coboundary})
Then there exists a function $g: X^2 \to \R$ such that
\begin{equation}
\cF(x_1,x_2,x_3,x_4)  = g(x_1,x_3) - g(x_1,x_4) 
-g(x_2,x_3) + g(x_2,x_4). \label{eqn: g}
\end{equation}
If we normalize by picking two distinguished elements $a,b \in X$ and 
specifying that $g(a,w) = g(w,b) = 0$ for all $w \in X$ then
$g$ is unique.
\end{thm}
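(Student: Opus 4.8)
The plan is to define $g$ by the substitution that the desired normalization forces, namely $g(s,t)=\cF(s,a,t,b)$, and then to verify the identity (\ref{eqn: g}) by peeling off the first pair and the last pair of arguments separately. The motivation for this choice is that if (\ref{eqn: g}) holds together with $g(a,\cdot)\equiv g(\cdot,b)\equiv 0$, then setting $x_2=a$ and $x_4=b$ collapses the right-hand side to $g(x_1,x_3)$; so $g(s,t)=\cF(s,a,t,b)$ is the only candidate, which simultaneously settles uniqueness. I would record first the reduction in the first pair: applying the coboundary hypothesis (\ref{coboundary}) with $w=a$ and then the skew-symmetry (\ref{sigma}) in the first two variables gives
\[
\cF(x_1,x_2,x_3,x_4)=\cF(x_1,a,x_3,x_4)+\cF(a,x_2,x_3,x_4)=\cF(x_1,a,x_3,x_4)-\cF(x_2,a,x_3,x_4).
\]

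The parallel reduction in the last pair is the assertion $\cF(s,a,x_3,x_4)=\cF(s,a,x_3,b)-\cF(s,a,x_4,b)=g(s,x_3)-g(s,x_4)$; substituting this into the display above yields exactly (\ref{eqn: g}). To obtain a coboundary relation in the last two variables I would use the reflection symmetry $\cF(x_1,x_2,x_3,x_4)=\cF(x_3,x_4,x_1,x_2)$, which is the first coordinate of the identity $\F(x_{(13)(24)})=\Theta((13)(24))\F(x)=\F(x)$ furnished by Theorem~(\ref{F-symmetry}) and the computation $\Theta((13)(24))=I$ of Remark~(\ref{remark}). Granting it, I apply (\ref{coboundary}) to $\cF(x_3,x_4,x_1,x_2)$ — whose active first pair is now $(x_3,x_4)$ — and reflect each resulting term back, obtaining $\cF(x_1,x_2,x_3,x_4)=\cF(x_1,x_2,x_3,w)+\cF(x_1,x_2,w,x_4)$; taking $w=b$ and using (\ref{sigma}) in the last two slots produces the needed last-pair reduction.

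The normalization is then automatic from the skew-symmetries: $g(a,w)=\cF(a,a,w,b)=0$ because (\ref{sigma}) gives $\cF(a,a,w,b)=-\cF(a,a,w,b)$, and likewise $g(w,b)=\cF(w,a,b,b)=0$ from skew-symmetry in the last two arguments. For uniqueness, if $g$ and $g'$ both satisfy (\ref{eqn: g}) and the normalization, their difference $h=g-g'$ satisfies $h(x_1,x_3)-h(x_1,x_4)-h(x_2,x_3)+h(x_2,x_4)=0$ with $h(a,\cdot)=h(\cdot,b)=0$; putting $x_2=a$ and $x_4=b$ collapses this to $h(x_1,x_3)=0$, so $g=g'$.

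The main obstacle is precisely the coboundary relation in the last two variables, since (\ref{coboundary}) is hypothesized only for the first pair. The honest route to it passes through the full permutation symmetry of $\cF$ and, in particular, relies on the Jacobi identity (\ref{tau}): indeed (\ref{sigma}) and (\ref{coboundary}) alone only force $\cF(x_1,x_2,x_3,x_4)=\phi(x_1,x_3,x_4)-\phi(x_2,x_3,x_4)$ with each $\phi(s,\cdot,\cdot)$ merely skew-symmetric, and such a $\phi(s,\cdot,\cdot)$ need not be a coboundary in its last two arguments unless the cocycle condition supplied by (\ref{tau}) is imposed. For this reason the hypotheses of the theorem should be read as including (\ref{tau}) alongside (\ref{sigma}) and (\ref{coboundary}), as the discussion preceding the statement already presumes; once the two coboundary reductions are in hand, the remainder is routine bookkeeping.
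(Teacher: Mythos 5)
Your proof is correct, and it follows the paper's own route: the same canonical definition $g(u,v)=\cF(u,a,v,b)$, the same two-stage telescoping (peel the first pair using (\ref{coboundary}) and (\ref{sigma}), then the last pair), and essentially the same normalization and uniqueness arguments. The one substantive difference is that you make explicit, and justify, a step the paper elides: the paper's computation splits $\cF(x_i,a,x_3,x_4)$ into $\cF(x_i,a,x_3,b)+\cF(x_i,a,b,x_4)$ while citing the coboundary relation ``in the first two and last two variables (equation~(\ref{coboundary}))'', even though equation~(\ref{coboundary}) as written concerns only the first pair; the last-pair version rests on the symmetry $\cF(x_1,x_2,x_3,x_4)=\cF(x_3,x_4,x_1,x_2)$ of Remark~(\ref{remark}) and Theorem~(\ref{F-symmetry}), which requires hypothesis (\ref{tau}) --- absent from the statement of Theorem~(\ref{thm: g-function}). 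Your claim that (\ref{tau}) is genuinely needed, not just convenient, is also right; here is a concrete witness: fix $c\in X$, let $e(x)=1$ if $x=c$ and $0$ otherwise, let $d:X^2\to\R$ be skew-symmetric but not of the form $h(u)-h(v)$ (possible once $|X|\ge 3$, since such differences satisfy the cocycle identity $d(u,v)+d(v,w)=d(u,w)$), and set $\cF(x_1,x_2,x_3,x_4)=\bigl(e(x_1)-e(x_2)\bigr)\,d(x_3,x_4)$. This $\cF$ satisfies (\ref{sigma}) and (\ref{coboundary}) but admits no representation~(\ref{eqn: g}) (evaluate at $x_1=c$, $x_2\ne c$ to see $d$ would have to be a difference $h(x_3)-h(x_4)$), and of course it violates (\ref{tau}). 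So your reading --- that (\ref{tau}) must be counted among the hypotheses, as the discussion preceding the theorem already presumes --- is a necessary correction rather than pedantry, and with it your argument is complete and otherwise identical to the paper's.
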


\begin{proof}
Given $\cF$ satisfying equations equations (\ref{sigma})
and (\ref{coboundary}), and two distinguished
elements $a,b \in X$ we define
\[
g(u,v) = \cF(u,a,v,b).
\]
Skew-symmetry in the first two variables and last two variables
implies $\cF(a,a,v,b) = 0$ and
$\cF(u,a,b,b) = 0$, so $g(a,w) = g(w,b) = 0$ for all $w \in X.$

To show that equation~(\ref{eqn: g}) is satisfied we make repeated
use of the skew-symmetry in the first two or last two variables
(equation~(\ref{sigma})) and of the coboundary relation in the 
first two and last two variables (equation~(\ref{coboundary})).

\begin{align*}
\cF(x_1,x_2,x_3,x_4)  &= \cF(x_1,a,x_3,x_4)  + \cF(a,x_2,x_3,x_4) \\
&= \cF(x_1,a,x_3,x_4)  - \cF(x_2, a, x_3,x_4)\\
&= \big( \cF(x_1,a,x_3,b)   + \cF(x_1,a,b,x_4)\big)
- \big( \cF(x_2, a, x_3 ,b) + \cF(x_2, a, b,x_4)\big)\\
&= \cF(x_1,a,x_3,b) - \cF(x_1,a,x_4,b)
- \cF(x_2, a, x_3 ,b) + \cF(x_2, a, x_4, b)\\
&= g(x_1,x_3) - g(x_1,x_4) 
-g(x_2,x_3) + g(x_2,x_4).
\end{align*}

To show uniqueness, suppose $g_1: X^2 \to \R$ is another function
satisfying the conclusion of the theorem.  Then for any $u,v \in X$
\begin{align*}
g(u,v) & = \cF(u,a,v,b) \\
&= g_1(u,v) - g_1(u,b)-  g_1(a,v) + g_1(a,b)\\
&= g_1(u,v).
\end{align*}

\end{proof}

\section{Proof of Theorem~(\ref{F-symmetry})}

{\bf Theorem~(\ref{F-symmetry})}
{\em Suppose for all $x \in X^4$ the function $\cF: X^4 \to \R$ satisfies
equations (\ref{tau}) and (\ref{sigma}) and we define $\F: X^4 \to \R^3$
by
\[
\F(x) = \big ( \cF(x), \cF(x_\tau), \cF(x_{\tau^2}) \big ).
\]
Then for every $\sigma \in S_4$ and every $x \in X^4$
\begin{equation}
\F(x_{\sigma})
= \Theta(\sigma) \F(x).
\end{equation}
}

\begin{proof} 
Since $\Theta$ is a homomorphism it suffices to prove that
$\F(x_{\sigma}) = \Theta(\sigma) \F(x)$ for each $\sigma$
in the set of generators of $\{\sigma_1,\sigma_2, \sigma_3\}.$

Let $x = (a,b,c,d).$ We define the numbers $r$ and $s$ by
$\cF(a,b,c,d) = r$ and $\cF(b,c,a,d) = s.$
Then
\begin{equation}\label{F-tau}
\begin{bmatrix}
\cF(x)\\  
\cF(x_{\tau})\\
\cF(x_{\tau^2})
\end{bmatrix}
= \begin{bmatrix}
\cF(a,b,c,d)\\
\cF(b,c,a,d)\\
\cF(c,a,b,d)
\end{bmatrix}
= \begin{bmatrix}
r\\
s\\
-r-s
\end{bmatrix}
\end{equation}
where the equality of the third component 
follows from equation~(\ref{tau}).

We next conclude 
\begin{equation}\label{F-sigma}
\begin{bmatrix}
\cF(x_{\sigma_1})\\  
\cF(x_{\tau \sigma_1})\\
\cF(x_{\tau^2 \sigma_1})
\end{bmatrix}
= \begin{bmatrix}
\cF(b,a,c,d)\\
\cF(a,c,b,d)\\
\cF(c,b,a,d)
\end{bmatrix}
= \begin{bmatrix}
-r\\
r+s\\
-s
\end{bmatrix}.
\end{equation}
where the first component comes from equation~(\ref{sigma}),
the third component comes from equation~(\ref{sigma}) applied
to the second component of equation~(\ref{F-tau}) 
and the second component follows from equation~(\ref{tau}).

Since
\[
\begin{bmatrix}
-1 & 0 & 0\\
0 & 0 & -1\\
0 & -1 & 0
\end{bmatrix}
\begin{bmatrix}
r\\
s\\
-r-s
\end{bmatrix}
= 
\begin{bmatrix}
-r\\
r+s\\
-s
\end{bmatrix}
\]
we have shown $\F(x_{\sigma_1}) = \Theta(\sigma_1) \F(x).$

Likewise from equation~(\ref{F-sigma}) we conclude
\begin{equation}
\begin{bmatrix}
\cF(x_{\sigma_2})\\  
\cF(x_{\tau \sigma_2})\\
\cF(x_{\tau^2 \sigma_2})
\end{bmatrix}
= \begin{bmatrix}
\cF(a,c,b,d)\\
\cF(c,b,a,d)\\
\cF(b,a,c,d)
\end{bmatrix}
= \begin{bmatrix}
r+s\\
-s\\
-r
\end{bmatrix}.
\end{equation}
Hence $\F(x_{\sigma_2}) = \Theta(\sigma_2) \F(x).$

To handle the case of $\sigma_3$ we define
$t = -\cF(x_{\tau^2\sigma_3}) = -\cF(d,a,b,c)$ so
\begin{equation}\label{F-sigma3}
\begin{bmatrix}
\cF(x_{\sigma_3})\\  
\cF(x_{\tau \sigma_3})\\
\cF(x_{\tau^2 \sigma_3})
\end{bmatrix}
= \begin{bmatrix}
\cF(a,b,d,c)\\
\cF(b,d,a,c)\\
\cF(d,a,b,c)
\end{bmatrix}
= \begin{bmatrix}
-r\\
r+t\\
-t
\end{bmatrix}
\end{equation}
where equality of the first components follows by equation~(\ref{sigma}) and
the value of the second component again comes from 
the fact that the sum of the components is $0$ (equation ~(\ref{tau})).

From the last component of 
equation~(\ref{F-sigma}) we get $\cF(c,b,d,a) = s$ and 
the second component of (\ref{F-sigma3}) shows
$\cF(b,d,c,a) = -r-t.$ 
By equation~(\ref{tau}), 
\[
\cF(c,b,d,a)+ \cF(b,d,c,a) + \cF(d,c,b,a) = 0.
\]
and hence
\[
s +(-r -t) +\cF(d,c,b,a) = 0.
\]
So $\cF(d,c,b,a) = r+t -s$ and $\cF(d,c,a,b) = -r -t +s.$ 
Also $\cF(c,a,d,b) = r+s$ by the third component of equation~(\ref{F-tau}).
Using equation ~(\ref{tau}) once again, we see
\[
\cF(c,a,d,b)+ \cF(a,d,c,b) + \cF(d,c,a,b) = 0,
\]
and hence
\[
(r + s) + \cF(a,d,c,b) + (-r -t +s) = 0.
\]
So $\cF(a,d,c,b) = -2s+t.$
But  the third component of equation~(\ref{F-sigma3}) implies
$\cF(a,d,c,b)$ is also equal to $-t$, so we conclude that 
$-t =  -2s+t$ and hence $s = t.$
Substituting $s$ for $t$ in equation~(\ref{F-sigma3}) gives
$\F(x_{\sigma_3}) = \Theta(\sigma_3) \F(x).$
\end{proof}

\end{document}